\newcommand{\nexteq}{\displaybreak[0]\\ &=}
\DeclareMathOperator{\rank}{rank}
\newcommand{\Z}{\mathbb{Z}}
\theoremstyle{plain}
\newtheorem{thm}{Theorem}[section]
\newtheorem{prop}[thm]{Proposition}
\newtheorem{lem}[thm]{Lemma}
\theoremstyle{definition}
\theoremstyle{remark}
\newtheorem{rem}{Remark}[section]
\newcommand{\forme}[1]{}
\newcommand{\gn}[1]{\langle {#1}\rangle}
\begin{document}
\title{Upper bounds on cyclotomic numbers}
\author{Koichi Betsumiya}
\email{betsumi@cc.hirosaki-u.ac.jp}
\address{Graduate School of Science and Technology, Hirosaki University,
Hirosaki 036-8561, Japan}

\author{Mitsugu Hirasaka}\thanks{The second author thanks the support from the grant represented by the third author
when the second author stayed at Hirosaki University from April 22-27 in 2011.}
\email{hirasaka@pusan.ac.kr}
\address{Department of Mathematics, Pusan National University, Jang-jeon dong, Busan 609-735, Korea}

\author{Takao Komatsu}\thanks{The third author is supported in part by the Grant-in-Aid for
Scientific research (C) (No. 22540005), the Japan Society for the
Promotion of Science.}
\email{komatsu@cc.hirosaki-u.ac.jp}
\address{Graduate School of Science and Technology,
Hirosaki University,
Hirosaki 036-8561, Japan}

\author{Akihiro Munemasa}
\email{munemasa@math.is.tohoku.ac.jp}
\address{Graduate School of Information Sciences, Tohoku University,
Sendai 980-8579, Japan}

\date{\today}


\begin{abstract}
In this article, we give
upper bounds for cyclotomic
numbers of order $e$ over a finite field with $q$ elements,
where $e$ is a positive divisor of $q-1$. In particular, we show
that under certain assumptions, cyclotomic numbers
are at most
$\lceil\frac{k}{2}\rceil$, and the cyclotomic number $(0,0)$ is at most $\lceil\frac{k}{2}\rceil-1$,
where $k=(q-1)/e$. These results are obtained by using a known formula
for the determinant of a matrix whose entries are binomial coefficients.
\end{abstract}
\maketitle

\section{Introduction}\label{sec:intro}

Let $q$ be a power of a prime $p$.
Let $GF(q)$ denote the Galois field with $q$ elements
and let $\alpha$ be a primitive element of $GF(q)$.
According to \cite[Section 2.2]{BE}, for a positive divisor $k$ of $q-1$
we define \textit{cyclotomic numbers} of order $e=\frac{q-1}{k}$ as follows.
For an integer $a$, let $C_a$ denote the cyclotomic coset
$\gn{\alpha^e}\alpha^a$.
For integers $a,b$ with $0\leq a,b< e$, the cyclotomic number $(a,b)$ is
defined as
\[(a,b)=|C_b\cap(C_a+1)|.\]
These numbers appear as intersection numbers of a \textit{cyclotomic scheme}
whose non-diagonal relations are Cayley digraphs over $GF(q)$ with
connection set $C_a$
with $0\leq a\leq e-1$ (see \cite[p.~66]{BCN}).
For example, when
$-1\in C_0$,
these Cayley digraphs
are actually undirected, and
$(0,0)$ is the number of common neighbors of two adjacent vertices.
We remark that all of these Cayley digraphs are pairwise isomorphic,
they are undirected if and only if $k$ is even or $p=2$, and
each of them is a disjoint union of complete graphs
if and only if $k+1$ is a power of $p$, in which case
$(0,0)=k-1$.

Cyclotomic numbers have been studied since the beginning of the last century and
they can
be determined from the knowledge of Gauss sums.
However, explicit evaluation of Gauss sums of large orders is
difficult in general \cite[pp.~98--99 and p.~152]{BE}, so one cannot expect a general formula
for cyclotomic numbers. Instead, we aim to establish upper bounds
for cyclotomic numbers.

In 1972, Wilson \cite{wilson2}
gave an inequality for higher cyclotomic numbers.
This in particular
gives upper and lower bounds for ordinary cyclotomic numbers.
However, the inequality for ordinary cyclotomic numbers is
a consequence of an exact evaluation of
the variance of cyclotomic numbers \cite{wilson}:
\begin{equation}\label{eq:wilson}
\sum_{a,b=0}^{e-1}((a,b)-\frac{q-2}{e^2})^2=(e-3)k+1+\frac{2k}{e}-\frac{1}{e^2}\leq q-1.
\end{equation}
For each fixed $e$, we see from (\ref{eq:wilson}) that the cyclotomic
number $(a,b)$ is close to $\frac{k}{e}$, that is,
\begin{equation}\label{O}
(a,b)=\frac{k}{e}+O(\sqrt{k})\quad\text{as }k\to\infty.
\end{equation}
However, when $e\geq k$, the formula does not seem to give
any reasonable bound for $(a,b)$ beyond the trivial bound $(a,b)\leq k$.
This is unavoidable since, when $k+1$ is a power of $p$, $(0,0)=k-1$
as we mentioned earlier.
The purpose of this paper is to give upper bounds on cyclotomic
numbers without assuming any relations among $e$ and $k$, but
instead, we need to assume that $p$ is sufficiently large compared to $k$.

\begin{thm}\label{thm:main}
Let $q$ be a power of an odd prime $p$ and $k$ a positive divisor of $q-1$.
Then we have the following:
\begin{enumerate}
\item $(a,b)\leq\left\lceil\frac{k}{2}\right\rceil$
for all $a,b$ with $0\leq a,b<e$ if $p>\frac{3k}{2}-1$;
\item $(a,a)\leq \lceil\frac{k}{2}\rceil-1$ for each $a$ with $0\leq a<e$ if $k$ is odd and $p>\frac{3k}{2}$;
\item $(0,0)\leq \lceil\frac{k}{2}\rceil-1$ if $p>\frac{3k}{2}$;
\item $(0,0)=2$ if $p$ is sufficiently large and $6\mid k$;
\item $(0,0)=0$ if $p$ is sufficiently large and $6\nmid k$.
\end{enumerate}
\end{thm}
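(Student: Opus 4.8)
The plan is to reinterpret $(0,0)$ as a count of common roots of two explicit integer polynomials, compute that count in characteristic zero, and transfer the answer to characteristic $p$ by a resultant argument. Since $k\mid q-1$, the coset $C_0=\gn{\alpha^e}$ is exactly the group $\mu_k$ of $k$-th roots of unity in $GF(q)$, so that $(0,0)=|C_0\cap(C_0+1)|=|\{x\in GF(q):x^k=1\text{ and }(x-1)^k=1\}|$. Thus $(0,0)$ equals the number of common roots in $GF(q)$ of $f(x)=x^k-1$ and $g(x)=(x-1)^k-1$, and to prove $(0,0)=0$ it suffices to show that $f$ and $g$ have no common root in an algebraic closure of $GF(q)$.

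First I would analyze the situation over $\Complex$. A common root $x$ of $f$ and $g$ is a root of unity for which $x-1$ is also a root of unity, so $|x|=|x-1|=1$; the unit circle and its translate by $1$ meet only at $x=\tfrac12\pm\tfrac{\sqrt{-3}}{2}$, the two primitive sixth roots of unity, each of multiplicative order $6$. Hence over $\Complex$ the only candidates are $\zeta_6^{\pm1}$, and when $6\nmid k$ these do not satisfy $x^k=1$. Therefore $f$ and $g$ are coprime in $\mathbb{Q}[x]$. (The same computation yields $\gcd(f,g)=\Phi_6(x)=x^2-x+1$ when $6\mid k$, which is the input needed for part~(iv).)

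Coprimality of $f$ and $g$ over $\mathbb{Q}$ means their resultant $R=\operatorname{Res}(f,g)$ is a nonzero integer depending only on $k$. Because $f$ and $g$ are monic, reduction mod $p$ gives $\operatorname{Res}(\bar f,\bar g)\equiv R\pmod p$, so for every prime $p$ not dividing $R$ the polynomials $\bar f$ and $\bar g$ share no root in $\bar{\mathbb{F}}_p$, and in particular none in $GF(q)$. Taking $p$ larger than every prime divisor of $R$---a threshold depending only on $k$---then forces $(0,0)=0$, which is the assertion.

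The only substantive step is the characteristic-zero classification of common roots, and here the elementary absolute-value argument disposes of it cleanly, so no appeal to deeper results on vanishing sums of roots of unity is needed. The point that requires care is the transfer: I must confirm that the exceptional set of primes is governed solely by the integer $R=\operatorname{Res}(f,g)$, which is manufactured from $f,g\in\Z[x]$ independently of the particular power $q=p^n$. Granting this, the bound ``$p$ sufficiently large'' is exactly what the resultant produces, and no further obstacle is anticipated.
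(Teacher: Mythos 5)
You have proved only part (v) of the theorem, sketched part (iv), and left parts (i)--(iii) untouched; the last three are the main assertions, so the proposal has a genuine gap. Your argument for (v) is correct and is in fact the paper's own argument in different clothing: the paper writes $C^{(0,0)}=\phi(T)$ for a cyclic companion matrix $T$ of $X^k-1$, so that $\det C^{(0,0)}=\prod_{j}((\zeta^j+1)^k-1)$ is (up to sign) exactly your resultant $\operatorname{Res}(x^k-1,(x-1)^k-1)$, and the unit-circle intersection argument is identical. For (iv) you still owe the transfer to characteristic $p$ of the \emph{upper} bound $(0,0)\le 2$ (the paper does this by noting that $C^{(0,0)}$ has a $(k-2)\times(k-2)$ minor that is a nonzero integer, the analogue of a nonvanishing subresultant) and the \emph{lower} bound $(0,0)\ge 2$ (the roots of $x^2-x+1$ lie in $GF(q)$ because $6\mid k\mid q-1$, and satisfy $x-1=x^2$ hence $(x-1)^k=1$); these are routine but not written.

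The real problem is that your resultant strategy cannot reach (i)--(iii). For general $a,b$ the relevant polynomials are $X^k-\alpha^{ak}$ and $(X+1)^k-\alpha^{bk}$, whose constant terms live in $GF(q)$ and have no canonical lift to $\Z$, so there is no characteristic-zero resultant whose nonvanishing you can invoke; and even in the case $(0,0)$ your method yields only ``$p$ larger than $|\operatorname{Res}|$,'' an ineffective bound growing super-exponentially in $k$, whereas the theorem asserts the explicit thresholds $p>\frac{3k}{2}-1$ and $p>\frac{3k}{2}$ together with the weaker conclusion $(a,b)\le\lceil k/2\rceil$ valid for \emph{all} $a,b$. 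The paper's key idea, which is absent from your proposal, is the identity $(a,b)=k-\rank C^{(a,b)}=\deg\gcd((X+1)^k-\alpha^{bk},\,X^k-\alpha^{ak})$, combined with a lower bound on $\rank C^{(a,b)}$ obtained by exhibiting an $m\times m$ (resp.\ $(m+1)\times(m+1)$) Toeplitz submatrix of binomial coefficients $\binom{k}{\,\cdot\,}$ independent of $a,b$, whose determinant Krattenthaler's formula evaluates as $\prod_i i!\,(k+i)!/((m+i)!\,(k-m+i)!)$; this is a unit mod $p$ precisely under the stated hypotheses on $p$. You would need to supply this (or an equivalent) argument to prove the statement as given.
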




Note that, if $2k+1$ is a power of $p$, then the upper bounds
in Theorem~\ref{thm:main}(i),(ii) and (iii) are attained.

In the proof of Theorem~\ref{thm:main}(i),(ii) and (iii)
we use a formula to expand the determinant of the matrix
\begin{equation}\label{eq:bi}
\left(\binom{r+s}{r-i+j}\right)_{1\leq i,j\leq m}
\end{equation}
given in \cite{ADC}. In Section~3 we show that the cyclotomic number $(a,b)$
is equal to $k-\rank C^{(a,b)}$, where $C^{(a,b)}$ is a certain matrix
with entries in $GF(q)$ (see Lemma~\ref{lem:1}).
Thus, giving a lower bound for the rank of $C^{(a,b)}$ results in
an upper bound for the cyclotomic number $(a,b)$.
Since the matrix $C^{(a,b)}$ contains a submatrix which is the modulo $p$
reduction of (\ref{eq:bi}) for suitable $r$ and $s$,
we obtain a lower bound for the rank whenever the determinant does not
vanish modulo $p$.

Though we reached \cite{ADC} via \cite{PW},
there is a typo in the formula (2) in \cite{PW}, so that
the simple expression (5) in \cite{PW} does not give the
evaluation of the above determinant.

In Section~2 we will establish Wilson's formula (\ref{eq:wilson}).
We include its proof as it has not been published.
In Section~3 we will prepare some results to prove Theorem~\ref{thm:main} in Section~4.
In Section~5, we show that the inequality
$(a,b)\leq\left\lceil\frac{k}{2}\right\rceil$ holds under some
assumptions different from the one in Theorem~\ref{thm:main}(i).

\section{Wilson's formula}
For the remainder of this article  we use the same notation as in Section 1
and we shall write $GF(q)$ as $F$ for short.


\begin{lem}[R. M. Wilson]\label{lem:Wilson}
Let
\begin{align*}
X&=\{(x,y)\in(F\setminus\{0,1\})^2\mid x\neq y,\;x\in yC_0,\;x-1
\in (y-1)C_0\},\\
Y&=\{(u,v)\in(C_0\setminus\{1\})^2\mid u\neq v\}.
\end{align*}
Then $f:X\to Y$ defined by
\[
f(x,y)=\left(\frac{x}{y},\frac{x-1}{y-1}\right)
\]
is a bijection.
In particular, $|X|=(k-1)(k-2)$.
\end{lem}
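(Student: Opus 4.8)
The plan is to prove that $f$ is a bijection by exhibiting an explicit two-sided inverse, after which the cardinality claim follows immediately. Since $C_0=\gn{\alpha^e}$ has order $(q-1)/e=k$, the set $C_0\setminus\{1\}$ has $k-1$ elements, so $Y$, being the set of \emph{ordered} pairs of distinct elements of $C_0\setminus\{1\}$, satisfies $|Y|=(k-1)(k-2)$; once $f$ is known to be a bijection we conclude $|X|=|Y|=(k-1)(k-2)$. First I would check that $f$ is well defined, i.e. that it really maps $X$ into $Y$. For $(x,y)\in X$ all four elements $x,y,x-1,y-1$ are nonzero, so $u:=x/y$ and $v:=(x-1)/(y-1)$ make sense; the hypotheses $x\in yC_0$ and $x-1\in(y-1)C_0$ give $u,v\in C_0$, hence $u,v\neq0$. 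One then checks $u\neq1$ and $v\neq1$ (each of these equalities would force $x=y$), and finally that $u=v$ is impossible, since $x/y=(x-1)/(y-1)$ cross-multiplies to $x=y$. Thus $(u,v)\in Y$.

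Next I would construct the inverse $g\colon Y\to X$ by solving the defining equations $u=x/y$ and $v=(x-1)/(y-1)$ for $x,y$ in terms of $u,v$. Substituting $x=uy$ into the second equation yields $uy-1=v(y-1)$, that is $(u-v)y=1-v$. Because $(u,v)\in Y$ forces $u\neq v$, this linear equation has the unique solution
\[
y=\frac{1-v}{u-v},\qquad x=uy=\frac{u(1-v)}{u-v},
\]
and I would define $g(u,v)=\bigl(\tfrac{u(1-v)}{u-v},\tfrac{1-v}{u-v}\bigr)$.

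The main content is then to verify that $g$ lands in $X$ and that $f$ and $g$ are mutually inverse. I would show $g(u,v)\in X$ by matching each excluded degeneracy to a constraint defining $Y$: $y\neq0$ follows from $v\neq1$, $y\neq1$ from $u\neq1$, $x\neq0$ from $u\neq0$, and $x\neq1$ from the identity $x-1=\tfrac{v(1-u)}{u-v}$ together with $u\neq1$, $v\neq0$; the relation $x=uy$ with $u\neq1$, $y\neq0$ gives $x\neq y$, while by construction $x/y=u\in C_0$ and $(x-1)/(y-1)=v\in C_0$ supply the two coset conditions. That $f\circ g=\mathrm{id}_Y$ holds by the construction of $g$, and for $g\circ f=\mathrm{id}_X$ I would simplify $u-v$ and $1-v$ for $(x,y)\in X$, obtaining $u-v=\tfrac{y-x}{y(y-1)}$ and $1-v=\tfrac{y-x}{y-1}$, whence $\tfrac{1-v}{u-v}=y$ (using $x\neq y$) and $x=uy$, recovering $(x,y)$. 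I do not expect a genuine obstacle here; the only delicate point is the careful bookkeeping ensuring that the four forbidden values $x,y\in\{0,1\}$ and $x=y$ correspond \emph{exactly} to the conditions $u\neq1$, $v\neq1$, $u\neq v$ defining $Y$, so that $g$ is forced to take values in $X$ rather than some larger set.
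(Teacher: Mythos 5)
Your proof is correct and follows essentially the same route as the paper: verify that $f$ maps $X$ into $Y$, exhibit the explicit inverse $g(u,v)=\left(\frac{u(1-v)}{u-v},\frac{1-v}{u-v}\right)$, and deduce $|X|=|Y|=(k-1)(k-2)$. The only difference is that you spell out the verifications (that $g$ lands in $X$ and that $f,g$ are mutually inverse) which the paper leaves to the reader.
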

\begin{proof}
If $(x,y)\in X$, then $\frac{x}{y},\frac{x-1}{y-1}\in C_0\setminus\{1\}$.
Since $x\neq y$, we have $\frac{x}{y}\neq\frac{x-1}{y-1}$. Thus
$f(x,y)\in Y$. The inverse mapping of $f$ is given by
\[
g(u,v)=\left(\frac{u(1-v)}{u-v},\frac{1-v}{u-v}\right).
\]
Since $|Y|=(k-1)(k-2)$, the second statement follows.
\end{proof}

\begin{lem}\label{lem:sum1}
We have the following:
\begin{enumerate}
\item $\sum_{a,b=0}^{e-1}(a,b)=q-2$;
\item $\sum_{a,b=0}^{e-1}(a,b)^2
=(k-1)(k-2)+q-2$.
\end{enumerate}
\end{lem}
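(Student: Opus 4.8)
The plan is to read both identities as point counts, using that the cosets $C_0,\dots,C_{e-1}$ partition $F\setminus\{0\}$, and then to feed the off-diagonal count of the second sum into Lemma~\ref{lem:Wilson}.

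For (i), I would fix $a$ and sum over $b$ first. Since $\{C_b\}_{b=0}^{e-1}$ partitions $F\setminus\{0\}$, intersecting with the $k$-element set $C_a+1$ gives
\[
\sum_{b=0}^{e-1}(a,b)=\sum_{b=0}^{e-1}|C_b\cap(C_a+1)|=|(C_a+1)\cap(F\setminus\{0\})|,
\]
which equals $k$ if $-1\notin C_a$ and $k-1$ if $-1\in C_a$, because $0\in C_a+1$ precisely when $-1\in C_a$. As $a$ ranges over $0,\dots,e-1$, the element $-1$ lies in exactly one coset $C_a$, so summing over $a$ yields $ek-1=(q-1)-1=q-2$.

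For (ii), the key observation is that $(a,b)^2=|C_b\cap(C_a+1)|^2$ counts ordered pairs $(x,x')\in(C_b\cap(C_a+1))^2$. Every $x\in C_b\cap(C_a+1)$ satisfies $x\neq0$ and $x-1\neq0$, and conversely each $x\in F\setminus\{0,1\}$ lies in $C_b\cap(C_a+1)$ for exactly one pair $(a,b)$, determined by $x\in C_b$ and $x-1\in C_a$. Two elements $x,x'$ then lie in a common set $C_b\cap(C_a+1)$ exactly when $x/x'\in C_0$ and $(x-1)/(x'-1)\in C_0$. Hence
\[
\sum_{a,b=0}^{e-1}(a,b)^2=\#\{(x,x')\in(F\setminus\{0,1\})^2\mid x/x'\in C_0,\ (x-1)/(x'-1)\in C_0\}.
\]

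Finally I would split this count according to whether $x=x'$ or $x\neq x'$. The diagonal contributes one pair for each of the $q-2$ values $x\in F\setminus\{0,1\}$, since $1\in C_0$. The off-diagonal part is precisely the set $X$ of Lemma~\ref{lem:Wilson} (its defining conditions $x\in yC_0$ and $x-1\in(y-1)C_0$ are the same as $x/y\in C_0$ and $(x-1)/(y-1)\in C_0$), whose cardinality is $(k-1)(k-2)$. Adding the two contributions gives $(q-2)+(k-1)(k-2)$, as claimed. The only step requiring care is the bijective bookkeeping identifying the off-diagonal count with $X$; once that identification is made, Lemma~\ref{lem:Wilson} finishes the proof at once.
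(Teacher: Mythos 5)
Your proof is correct and follows essentially the same route as the paper: part (i) by summing the partition $\{C_b\}$ over $F^\times$, and part (ii) by identifying the off-diagonal ordered pairs with the set $X$ of Lemma~\ref{lem:Wilson} (the paper phrases this as computing $\sum(a,b)^2-\sum(a,b)$ rather than splitting into diagonal and off-diagonal parts, but that is the same decomposition). No gaps.
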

\begin{proof}
(i)
\begin{align*}
\sum_{a,b=0}^{e-1}(a,b)
&=\sum_{a,b=0}^{e-1}|(C_a+1)\cap C_b|
\nexteq
\sum_{a=0}^{e-1}|(C_a+1)\cap F^\times|
\nexteq
|(F^\times+1)\cap F^\times|
\nexteq
|F\setminus\{0,1\}|
\nexteq
q-2.
\end{align*}

(ii) \begin{align*}
&\sum_{a,b=0}^{e-1}(a,b)^2-\sum_{a,b=0}^{e-1}(a,b)
\\&=\sum_{a,b=0}^{e-1}|\{(x,y)\in((C_a+1)\cap C_b)^2\mid x\neq y\}|
\nexteq
|\bigcup_{a,b=0}^{e-1}\{(x,y)\in((C_a+1)\cap C_b)^2\mid x\neq y\}|
\nexteq
|\{(x,y)\in(F\setminus\{0,1\})^2\mid x\neq y,\\
&\qquad
\exists a\in\{0,1,\dots,e-1\},\;\{x-1,y-1\}\subset C_a,\\
&\qquad
\exists b\in\{0,1,\dots,e-1\},\;\{x,y\}\subset C_b\}|
\nexteq
|\{(x,y)\in(F\setminus\{0,1\})^2\mid x\neq y,\;
x\in yC_0,\;x-1\in (y-1)C_0\}|
\nexteq
|\{(u,v)\in C_0\setminus\{1\}\mid u\neq v\}|
\nexteq
(k-1)(k-2)
\end{align*}
by Lemma~\ref{lem:Wilson}.
\end{proof}


As mentioned in Section~1, Wilson showed the variance of cyclotomic numbers
through \cite{wilson}.
\begin{thm}[Wilson]\label{thm:wilson}
\[
\sum_{a,b=0}^{e-1}((a,b)-\frac{q-2}{e^2})^2
=(e-3)k+\frac{2k}{e}+1-\frac{1}{e^2}.
\]
\end{thm}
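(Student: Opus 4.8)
The plan is to expand the square and reduce everything to the two moment identities from Lemma~\ref{lem:sum1}. Writing $N=e^2$ for the number of ordered pairs $(a,b)$ with $0\le a,b<e$ and abbreviating $\mu=\frac{q-2}{e^2}$, I would compute
\[
\sum_{a,b=0}^{e-1}\left((a,b)-\mu\right)^2
=\sum_{a,b=0}^{e-1}(a,b)^2-2\mu\sum_{a,b=0}^{e-1}(a,b)+N\mu^2 .
\]
Here the point of the mean-centering is that $\mu=\frac{1}{N}\sum_{a,b}(a,b)$ is exactly the average value, which is consistent with Lemma~\ref{lem:sum1}(i): since $\sum_{a,b}(a,b)=q-2$ and there are $N=e^2$ terms, the average is $\frac{q-2}{e^2}$. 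So the cross term and the constant term collapse in the familiar way, and the whole sum equals $\sum_{a,b}(a,b)^2-N\mu^2$.

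Next I would substitute the two evaluations supplied by Lemma~\ref{lem:sum1}. By part~(ii), $\sum_{a,b}(a,b)^2=(k-1)(k-2)+q-2$, and the subtracted term is $N\mu^2=e^2\cdot\frac{(q-2)^2}{e^4}=\frac{(q-2)^2}{e^2}$. Thus the left-hand side equals
\[
(k-1)(k-2)+(q-2)-\frac{(q-2)^2}{e^2}.
\]
It then remains to verify algebraically that this expression coincides with the claimed closed form $(e-3)k+\frac{2k}{e}+1-\frac{1}{e^2}$.

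The only real work is this final algebraic identity, and it is a matter of substituting $q-1=ek$, i.e. $q-2=ek-1$. I would expand $(k-1)(k-2)=k^2-3k+2$ and $(q-2)^2=(ek-1)^2=e^2k^2-2ek+1$, so that $\frac{(q-2)^2}{e^2}=k^2-\frac{2k}{e}+\frac{1}{e^2}$. Combining,
\[
k^2-3k+2+(ek-1)-k^2+\frac{2k}{e}-\frac{1}{e^2}
=ek-3k+\frac{2k}{e}+1-\frac{1}{e^2}
=(e-3)k+\frac{2k}{e}+1-\frac{1}{e^2},
\]
which is exactly the right-hand side. There is no genuine obstacle here: the bijection underlying the second moment was already established in Lemma~\ref{lem:Wilson}, so all the combinatorial content is upstream, and this theorem is a bookkeeping consequence. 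The one place to stay careful is matching the averaging constant $\mu$ to the correct count $e^2$ rather than $e(e-1)$ or similar, and keeping the substitution $q-2=ek-1$ straight throughout.
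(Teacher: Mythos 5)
Your proposal is correct and follows essentially the same route as the paper: expand the square, substitute the first and second moments from Lemma~\ref{lem:sum1}, and simplify using $q-2=ek-1$. The algebra checks out, so nothing further is needed.
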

\begin{proof}
By Lemma~\ref{lem:sum1}, we have
\begin{align*}
\sum_{a,b=0}^{e-1}((a,b)-\frac{q-2}{e^2})^2
&=
\sum_{a,b=0}^{e-1}(a,b)^2-\frac{2(q-2)}{e^2}
\sum_{a,b=0}^{e-1}(a,b)+\frac{(q-2)^2}{e^2}
\nexteq
(k-1)(k-2)+q-2-\frac{2(q-2)^2}{e^2}+\frac{(q-2)^2}{e^2}
\nexteq
(e-3)k+\frac{2k}{e}+1-\frac{1}{e^2}.
\end{align*}
\end{proof}


Since
\[
(e-3)k+\frac{2k}{e}+1-\frac{1}{e^2}\leq ek,
\]
Theorem~\ref{thm:wilson} implies
\begin{equation}\label{r1}
|(a,b)-\frac{q-2}{e^2}|<\sqrt{ek},
\end{equation}
Thus, for $e$ fixed and $k\to\infty$,
\begin{align*}
\frac{1}{\sqrt{k}}|(a,b)-\frac{k}{e}|
&\leq
\frac{1}{\sqrt{k}}(|(a,b)-\frac{q-2}{e^2}|+\frac{1}{e^2})
\\&\leq
\sqrt{e}+\frac{1}{e^2\sqrt{k}}
\end{align*}
which is bounded. This implies (\ref{O}).

\begin{rem}
By (\ref{r1}), we have
\begin{align*}
(a,b)&<
\frac{k}{e}+\sqrt{ek}.
\end{align*}
If $\frac{k}{16}\geq e\geq4$, then
\begin{align*}
(a,b)&<
\frac{k}{4}+\sqrt{\frac{k^2}{16}}
\nexteq
\frac{k}{2}.
\end{align*}
We shall show in the next section that a similar inequality holds
without any assumption on $e$ and $k$, if $p$ is sufficiently large.
\end{rem}

\section{Cyclotomic numbers and determinants}

Define a $k\times k$ matrix $C^{(a,b)}$ with entries in $F$ for integers $a$, $b$
by
\[
\left(C^{(a,b)}\right)_{i,j}=
\begin{cases}
1+\alpha^{ak}-\alpha^{bk}&\text{if $i=j$,}\\
\binom{k}{j-i}&\text{if $i<j$,}\\
\alpha^{ak}\binom{k}{i-j}&\text{otherwise.}
\end{cases}
\]

\begin{lem}\label{lem:1}
\[
(a,b)=k-\rank C^{(a,b)}=\deg(\gcd((X+1)^k-\alpha^{bk},X^k-\alpha^{ak})).
\]

\end{lem}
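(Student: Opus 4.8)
The plan is to realize the cyclotomic number $(a,b)$ simultaneously as a kernel dimension and as a gcd degree by working in the quotient algebra $R=F[X]/(X^k-\alpha^{ak})$. First I would record the key reinterpretation of the cyclotomic cosets: since $k\mid q-1$ while $p\mid q$, we have $p\nmid k$, and for any integer $a$ the coset $C_a=\gn{\alpha^e}\alpha^a$ consists of exactly the $k$ elements $x\in F$ with $x^k=\alpha^{ak}$ (because $ek=q-1$). Hence $X^k-\alpha^{ak}=\prod_{x\in C_a}(X-x)$ is separable and splits over $F$, and likewise $(X+1)^k-\alpha^{bk}$ is separable. An element $w\in F$ is a common root of $X^k-\alpha^{ak}$ and $(X+1)^k-\alpha^{bk}$ precisely when $w\in C_a$ and $w+1\in C_b$, and the map $w\mapsto w+1$ identifies the set of such $w$ with $C_b\cap(C_a+1)$, so the number of common roots equals $(a,b)$.

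Next I would pass to $R=F[X]/(X^k-\alpha^{ak})$. By the Chinese Remainder Theorem and separability, evaluation at the roots gives an isomorphism $R\cong\prod_{x\in C_a}F$, under which multiplication by $g(X)=(X+1)^k-\alpha^{bk}$ becomes the diagonal operator with entries $g(x)$, $x\in C_a$. The kernel of this multiplication map $\mu_g$ therefore has dimension $\#\{x\in C_a:g(x)=0\}=\#\{x\in C_a:x+1\in C_b\}=(a,b)$. The same kernel dimension equals $\deg\gcd((X+1)^k-\alpha^{bk},\,X^k-\alpha^{ak})$ by the standard description of $\ker\mu_g$ as the multiples of $f/\gcd(f,g)$ inside $R$ (writing $f=X^k-\alpha^{ak}$), which yields the second asserted equality at once.

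It then remains to identify $C^{(a,b)}$ with the matrix of $\mu_g$, so that its rank is $k-\dim\ker\mu_g=k-(a,b)$. Using the monomial basis $1,X,\dots,X^{k-1}$ I would compute $g\cdot X^{j-1}\bmod(X^k-\alpha^{ak})$: expanding $(X+1)^k=\sum_l\binom{k}{l}X^l$, reducing $X^k\equiv\alpha^{ak}$, and applying $\binom{k}{l}=\binom{k}{k-l}$ to the wrapped terms, one finds that the coefficient of $X^{i-1}$ equals $\binom{k}{i-j}$ when $i>j$, equals $1+\alpha^{ak}-\alpha^{bk}$ when $i=j$, and equals $\alpha^{ak}\binom{k}{j-i}$ when $i<j$. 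This matrix is exactly the transpose of $C^{(a,b)}$, so $\rank C^{(a,b)}=\rank\mu_g=k-(a,b)$, giving $(a,b)=k-\rank C^{(a,b)}$ and completing the proof.

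The routine-but-delicate step, and the one I expect to be the main obstacle, is this last bookkeeping: keeping the factor $\alpha^{ak}$ on the correct side when the monomial $X^{l+j-1}$ wraps past degree $k-1$, and correctly flipping the binomial index through $\binom{k}{l}=\binom{k}{k-l}$ in those wrapped terms, so that the multiplication matrix lands on the transpose of $C^{(a,b)}$ rather than on $C^{(a,b)}$ itself (transposition is harmless, since it preserves rank). Everything else follows directly from the CRT decomposition of $R$ and the separability forced by $p\nmid k$.
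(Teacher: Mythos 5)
Your argument is correct and is essentially the paper's own proof in different language: the quotient ring $F[X]/(X^k-\alpha^{ak})$ with its CRT decomposition is exactly the paper's companion-type matrix $T$ with its $k$ distinct eigenvalues, and your identification of $(C^{(a,b)})^{t}$ as the matrix of multiplication by $g$ is the paper's identity $C^{(a,b)}=\phi(T)$. Both routes reduce the rank computation to counting common roots of $(X+1)^k-\alpha^{bk}$ and $X^k-\alpha^{ak}$, so no further comparison is needed.
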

\begin{proof}
For simplicity, write
$\beta=\alpha^{ak}$ and
$\gamma=1+\beta-\alpha^{bk}$.
Let $T$ denote the following $k\times k$ matrix:
\[
T=\begin{pmatrix}
&1&&\\
&&\ddots&\\
&&&1\\
\beta&&&
\end{pmatrix}.
\]
Then $T$ has the characteristic polynomial
$\psi(X)=X^k-\beta$ which has $k$ distinct roots in $F$.
Define another polynomial $\phi(X)$ by
\[
\phi(X)=(X+1)^k-\alpha^{bk}.
\]
Then
\begin{align*}
C^{(a,b)}&=\gamma I+\sum_{i=1}^{k-1}\binom{k}{i}T^i
\nexteq
(1+\beta-\alpha^{bk})I+\sum_{i=1}^{k-1}\binom{k}{i}T^i
\nexteq
I+T^k-\alpha^{bk}I+\sum_{i=1}^{k-1}\binom{k}{i}T^i
\nexteq
(T+I)^k-\alpha^{bk}I
\nexteq
\phi(T).
\end{align*}
The multiplicity of $0$ as an eigenvalue of $C^{(a,b)}$ is
the number of eigenvalues $\theta$ of $T$ with
$\phi(\theta)=0$. Thus
\begin{align*}
k-\rank C^{(a,b)}&=
|\{\theta\mid\theta\text{ is an eigenvalue of $T$, }
\phi(\theta)=0\}|
\nexteq
|\{x\in F\mid \psi(x)=\phi(x)=0\}|
\nexteq
|\{x\in F\mid x^k=\alpha^{ak},\;(x+1)^k=\alpha^{bk}\}|
\nexteq
|\{x\in F\mid x\in\langle\alpha^e\rangle\alpha^a,\;x+1
\in \langle\alpha^e\rangle\alpha^b\}|
\nexteq
|(C_a+1)\cap C_b|
\nexteq
(a,b).
\end{align*}
Since $(a,b)$ is the number of common roots in $F$ of $\phi(X)$ and $\psi(X)$,
the second equality holds.
\end{proof}

Let $m=\left\lfloor\frac{k}{2}\right\rfloor$.
The upper right $m\times m$ submatrix of $C^{(a,b)}$ is
\begin{equation}\label{ur}
\begin{pmatrix}
\binom{k}{m}&\binom{k}{m+1}&\cdots&\binom{k}{2m-1}\\
\vdots&\vdots&&\vdots\\
\binom{k}{1}&\binom{k}{2}&\cdots&\binom{k}{m}
\end{pmatrix}
\:\:\mbox{if $k=2m$},
\end{equation}

\begin{equation}\label{urodd}
\begin{pmatrix}
\binom{k}{m+1}&\binom{k}{m+2}&\cdots&\binom{k}{2m}\\
\vdots&\vdots&&\vdots\\
\binom{k}{2}&\binom{k}{3}&\cdots&\binom{k}{m+1}
\end{pmatrix}
\:\:\mbox{if $k=2m+1$},
\end{equation}
whose determinants can be calculated.

\begin{lem}[{\cite[Section 2.2]{ADC}}]\label{lem:ADC}
Let $r,s,m$ be integers with $r,s,m\geq0$. Then
\[
\det\left(\binom{r+s}{r-i+j}\right)_{1\leq i,j\leq m}
=\prod_{i=0}^{m-1}
\frac{i!(r+s+i)!}{(r+i)!(s+i)!}.
\]
\end{lem}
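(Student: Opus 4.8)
The plan is to prove the identity by induction on $m$, exploiting the fact that the matrix $B_m(r,s):=\left(\binom{r+s}{r-i+j}\right)_{1\le i,j\le m}$ is a Toeplitz matrix: its $(i,j)$ entry depends only on $j-i$, being $\binom{r+s}{r+(j-i)}$. Because of this homogeneity, every contiguous minor obtained by deleting the first and/or last rows and columns is again a matrix of the same shape, with the parameters $r$ and $s$ merely shifted while $r+s$ stays fixed. This is exactly the situation in which the Desnanot--Jacobi (Dodgson condensation) identity produces a closed recursion. Write $D_m(r,s):=\det B_m(r,s)$, with the convention $D_0=1$.

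First I would identify the five relevant minors. Deleting row $m$ and column $m$, or row $1$ and column $1$, leaves $D_{m-1}(r,s)$ after reindexing. Deleting row $1$ and column $m$ shifts the upper parameter and gives $D_{m-1}(r-1,s+1)$, while deleting row $m$ and column $1$ gives $D_{m-1}(r+1,s-1)$; deleting both the first and last rows and columns leaves the central minor $D_{m-2}(r,s)$. Substituting these into Desnanot--Jacobi yields the quadratic recurrence
\[
D_m(r,s)\,D_{m-2}(r,s)=D_{m-1}(r,s)^2-D_{m-1}(r-1,s+1)\,D_{m-1}(r+1,s-1).
\]
Since $r+s$ is unchanged throughout, one may fix $N=r+s$ and induct on $m$, with base cases $m=0$ (empty determinant $1$) and $m=1$ ($\binom{r+s}{r}$), both matching the claimed value.

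It then remains to check that the proposed product $P_m(r,s):=\prod_{i=0}^{m-1}\frac{i!(r+s+i)!}{(r+i)!(s+i)!}$ satisfies the \emph{same} recurrence, i.e.
\[
P_m(r,s)\,P_{m-2}(r,s)+P_{m-1}(r-1,s+1)\,P_{m-1}(r+1,s-1)=P_{m-1}(r,s)^2.
\]
Dividing through by $P_{m-1}(r,s)^2$ turns each term into a ratio of factorials that telescopes to a product of a bounded number of linear factors in $r$ and $s$; the resulting identity is a finite, routine algebraic verification. Because every factor of $P_{m-2}(r,s)$ is positive for $r,s\ge 0$, the value $D_{m-2}(r,s)$ is nonzero, so the recurrence may legitimately be solved for $D_m(r,s)$, completing the induction. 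The cases $r=0$ and $s=0$, where a shifted parameter would leave the admissible range, are handled directly: there $B_m(r,s)$ is triangular with unit diagonal, so $D_m=1=P_m$, again in agreement.

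The main obstacle is twofold: correctly bookkeeping the parameter shifts in the Desnanot--Jacobi identity, and then carrying out the ratio simplification that confirms $P_m$ obeys the recurrence, while keeping all parameters in the range $r,s\ge 0$ so the induction hypothesis applies to each minor. A conceptually cleaner, though enumeration-heavy, alternative is to read the determinant through the Lindstr\"om--Gessel--Viennot lemma: taking sources $A_i=(i,-i)$ and sinks $B_j=(r+j,s-j)$, the number of monotone lattice paths from $A_i$ to $B_j$ is exactly $\binom{r+s}{r-i+j}$, and the diagonal arrangement of the $A_i$ and $B_j$ forces the identity to be the only permutation admitting a non-intersecting family; hence $D_m(r,s)$ counts such families. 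These biject with plane partitions in an $m\times r\times s$ box, whose number MacMahon's box formula evaluates as precisely $\prod_{i=0}^{m-1}\frac{i!(r+s+i)!}{(r+i)!(s+i)!}$. I would present the inductive argument as the self-contained proof and mention the lattice-path interpretation as motivation.
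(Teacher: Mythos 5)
Your argument is correct, but it is worth noting that the paper itself gives no proof of this lemma at all: it is quoted verbatim from Krattenthaler's \emph{Advanced Determinant Calculus} (reference [ADC], Section 2.2) and used as a black box. So any proof you supply is necessarily ``a different route.'' Your Dodgson condensation induction is sound: the five minors are identified correctly (the Toeplitz structure makes the corner minors equal to $B_{m-1}(r,s)$ and the off-corner minors equal to $B_{m-1}(r\mp1,s\pm1)$, all with $r+s$ fixed), the base cases $m=0,1$ and the boundary cases $r=0$ or $s=0$ (triangular with unit diagonal) are handled, and the divisibility step is justified since the induction hypothesis gives $D_{m-2}=P_{m-2}>0$. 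The one step you defer --- that $P$ satisfies the same recurrence --- does check out: dividing by $P_{m-1}(r,s)^2$ gives $\frac{(m-1)(r+s+m-1)}{(r+m-1)(s+m-1)}$ for the first term and, after telescoping, $\frac{rs}{(r+m-1)(s+m-1)}$ for the second, and $(m-1)(r+s+m-1)+rs=(r+m-1)(s+m-1)$. Compared with simply citing [ADC], your approach buys a self-contained elementary proof at the cost of a page of bookkeeping; the Lindstr\"om--Gessel--Viennot/MacMahon route you sketch as motivation is in fact closer in spirit to how such determinants are usually established in the source the paper cites.
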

By Lemma~\ref{lem:ADC}, the matrices (\ref{ur}) and (\ref{urodd}) have determinants
\begin{equation}\label{urd}
\prod_{i=0}^{m-1}
\frac{i!(k+i)!}{(m+i)!(k-m+i)!}\:\:\mbox{if $k=2m$},
\end{equation}
\begin{equation}\label{urdodd}
\prod_{i=0}^{m-1}
\frac{i!(k+i)!}{(m+1+i)!(k-m-1+i)!}\:\:\mbox{if $k=2m+1$},
\end{equation}
respectively.

Similarly, the matrix
\begin{equation}\label{ur1}
\begin{pmatrix}
\binom{k}{m}&\binom{k}{m+1}&\cdots&\binom{k}{2m-1}&\binom{k}{2m}\\
\vdots&\vdots&&\vdots\\
\binom{k}{1}&\binom{k}{2}&\cdots&\binom{k}{m}&\binom{k}{m+1}\\
\binom{k}{0}&\binom{k}{1}&\cdots&\binom{k}{m-1}&\binom{k}{m}
\end{pmatrix}
\end{equation}
has determinant
\begin{equation}\label{urd1}
\prod_{i=0}^{m}
\frac{i!(k+i)!}{(m+i)!(k-m+i)!}.
\end{equation}

\section{Bounds}

\begin{prop}\label{prop:1}
If
$p>\frac{3k}{2}-1$,
then
$(a,b)\leq\left\lceil\frac{k}{2}\right\rceil$
for all $a,b$ with $0\leq a,b<e$.
\end{prop}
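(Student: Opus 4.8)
The plan is to convert the desired inequality into a rank bound and then to exhibit a single explicit nonvanishing minor. By Lemma~\ref{lem:1} we have $(a,b)=k-\rank C^{(a,b)}$, and since $k-\lceil k/2\rceil=\lfloor k/2\rfloor=m$, the inequality $(a,b)\leq\lceil k/2\rceil$ is equivalent to $\rank C^{(a,b)}\geq m$. It therefore suffices to produce, for every $a$ and $b$, an $m\times m$ submatrix of $C^{(a,b)}$ whose determinant is nonzero in $F$.

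The natural candidate is the upper right $m\times m$ submatrix, which is exactly \eqref{ur} when $k=2m$ and \eqref{urodd} when $k=2m+1$. The crucial feature is that this submatrix depends only on $k$: all of its entries are binomial coefficients $\binom{k}{\cdot}$, with no occurrence of $\alpha^{ak}$ or $\alpha^{bk}$, so one computation settles the bound uniformly in $a$ and $b$. By Lemma~\ref{lem:ADC} its determinant equals \eqref{urd} or \eqref{urdodd}, an integer written as a product of ratios of factorials.

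The key step is to show that this determinant does not vanish modulo $p$, which I would do by a $p$-adic valuation count. Writing the determinant as $\prod_i i!\,(k+i)!/((m+i)!\,(k-m+i)!)$ (and the analogous expression in the odd case), its $p$-adic valuation is the corresponding signed sum of the valuations $v_p(n!)$. Since $v_p(n!)=0$ whenever $n<p$, it is enough to check that every factorial argument appearing is smaller than $p$. The largest such argument is $k+m-1$, which equals $\frac{3k}{2}-1$ when $k=2m$ and equals $3m<\frac{3k}{2}-1$ when $k=2m+1$; in both cases the hypothesis $p>\frac{3k}{2}-1$ forces every argument to be $<p$. Hence each factorial contributes valuation $0$, the determinant is a unit at $p$, and its reduction in $F$ is nonzero.

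Consequently $C^{(a,b)}$ has a nonzero $m\times m$ minor, so $\rank C^{(a,b)}\geq m$, giving $(a,b)\leq\lceil k/2\rceil$. The only real obstacle is the valuation bound of the previous paragraph: once one identifies $k+m-1$ as the extreme factorial argument and compares it with $\frac{3k}{2}-1$, the conclusion is forced. A minor bookkeeping point is the parity split, since the even and odd submatrices \eqref{ur} and \eqref{urodd} carry the slightly different determinants \eqref{urd} and \eqref{urdodd}; I would carry the two cases along in parallel throughout.
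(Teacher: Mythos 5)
Your proof is correct and follows essentially the same route as the paper: both use the upper right $m\times m$ submatrix \eqref{ur} or \eqref{urodd}, evaluate its determinant via Lemma~\ref{lem:ADC}, and observe that $p>\frac{3k}{2}-1$ forces every factorial argument (the largest being $k+m-1$) below $p$, so the determinant is nonzero modulo $p$ and $\rank C^{(a,b)}\geq m$. You simply spell out the $p$-adic valuation count that the paper leaves implicit.
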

\begin{proof}
Let $m=\left\lfloor\frac{k}{2}\right\rfloor$.
Since $p>k+m-1$,
the determinant (\ref{urd}) (resp.\ (\ref{urdodd}))
is nonzero modulo $p$ when $k=2m$ (resp.\ $k=2m+1$), hence
$\rank C^{(a,b)}\geq\left\lfloor\frac{k}{2}\right\rfloor$.
The result then follows from Lemma~\ref{lem:1}.
\end{proof}

The result of Proposition~\ref{prop:1} can be improved for
$\lambda=(0,0)$.

\begin{prop}\label{prop:0}
Suppose
$p>\frac{3k}{2}$.
Then we have the following:
\begin{enumerate}
\item If $k$ is odd, then $(a,a)\leq \left\lceil\frac{k}{2}\right\rceil-1$ for all $a$ with $0\leq a<e$;
\item $(0,0)\leq \left\lceil\frac{k}{2}\right\rceil-1$.
\end{enumerate}
\end{prop}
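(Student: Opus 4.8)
The overall plan is to bound the cyclotomic numbers from above by bounding $\rank C^{(a,b)}$ from below, exactly as in Proposition~\ref{prop:1}. By Lemma~\ref{lem:1} it suffices to prove $\rank C^{(a,a)}\ge m+1$ in (i) and $\rank C^{(0,0)}\ge m+1$ in (ii), where $m=\lfloor k/2\rfloor$; indeed this yields $(a,a)=k-\rank C^{(a,a)}\le k-m-1=\lceil k/2\rceil-1$. The new feature over Proposition~\ref{prop:1} is the diagonal: when $a=b$ the entry $\gamma=1+\alpha^{ak}-\alpha^{bk}$ collapses to $\binom{k}{0}=1$, so the corner of the larger matrix (\ref{ur1}) can be realized inside $C^{(a,a)}$.

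First I would treat $k$ odd, which gives (i) and, taking $a=0$, the case $k$ odd of (ii). Writing $k=2m+1$, the submatrix of $C^{(a,a)}$ on rows $1,\dots,m+1$ and columns $m+1,\dots,k$ is precisely (\ref{ur1}): its unique diagonal entry sits in the lower-left corner and equals $\gamma=1=\binom{k}{0}$, while every other entry is strictly above the diagonal and equals $\binom{k}{j-i}$. Its determinant is (\ref{urd1}), and every factorial there has argument at most $k+m=\tfrac{3k-1}{2}<\tfrac{3k}{2}<p$, hence is invertible modulo $p$; so the determinant is nonzero in $F$ and $\rank C^{(a,a)}\ge m+1$.

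For (ii) with $k$ even the matrix (\ref{ur1}) no longer fits in $C^{(0,0)}$, and this is where I expect the main difficulty, so I would instead exploit a symmetry special to $(0,0)$. By Lemma~\ref{lem:1}, $(0,0)=|S|$ where
\[
S=\{x\in F\mid x^k=1,\ (x+1)^k=1\},
\]
and, since $k$ is even, $-1$ is a $k$th root of unity, so both $\tau\colon x\mapsto 1/x$ and $\sigma\colon x\mapsto-1-x$ map $S$ bijectively to itself. As Möbius transformations they satisfy $\sigma^2=\tau^2=(\sigma\tau)^3=1$, so $\langle\sigma,\tau\rangle$ is a copy of the symmetric group of degree $3$ acting on $S$, and every orbit has length $1,2,3$, or $6$.

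The crux is to classify the short orbits. There is no length-$1$ orbit, as $\tau$ fixes only $\pm1$ and $\sigma$ only $-1/2$ (recall $p>3$). A length-$2$ orbit is fixed by the order-$3$ element $\sigma\tau$, forcing $x^2+x+1=0$, and the two primitive cube roots of unity lie in $S$ exactly when $6\mid k$, giving one such orbit then. A length-$3$ orbit is fixed by an involution, and the only candidates are $1,-1/2,-2$, which lie in $S$ exactly when $2^k=1$, forming a single orbit. Hence
\[
|S|\equiv 2\,[\,6\mid k\,]+3\,[\,2^k=1\,]\pmod 6,
\]
and a short case check against the residue of $m$ (using $6\mid k\iff 3\mid m$) gives $|S|\not\equiv m\pmod 6$ in every case, so $|S|\ne m$. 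Since Proposition~\ref{prop:1} already yields $|S|=(0,0)\le\lceil k/2\rceil=m$, we conclude $(0,0)\le m-1$. I expect the delicate points to be the verification that the listed short orbits are the only ones and the final residue comparison; it is reassuring that the two exceptional contributions match precisely the extremal cases $6\mid k$ and $2^k=1$ (the latter occurring when $2k+1$ is a power of $p$), where the bound is attained.
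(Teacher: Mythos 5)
Your proof is correct. For part (i) (and hence the odd case of (ii)) it is exactly the paper's argument: the diagonal entry $\gamma=1=\binom{k}{0}$ lets the upper right $(m+1)\times(m+1)$ submatrix of $C^{(a,a)}$ coincide with (\ref{ur1}), whose determinant (\ref{urd1}) survives modulo $p$ because $p>\frac{3k}{2}>k+m$. For part (ii) with $k$ even you take a genuinely different route. The paper stays with linear algebra: since $a=b=0$ forces $\beta=1$, the entries of $C^{(0,0)}$ below the diagonal are the wrap-around binomials $\binom{k}{i-j}$, so the $(m+1)\times(m+1)$ submatrix on the first $m+1$ rows and on the upper right $m$ columns together with the first column is again (\ref{ur1}), giving $\rank C^{(0,0)}\ge m+1$ directly from the same determinant. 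Your alternative --- the anharmonic $S_3$-action generated by $x\mapsto 1/x$ and $x\mapsto -1-x$ on $S=\{x\in F: x^k=(x+1)^k=1\}$ --- checks out: there are no fixed points since $p>3$, the only possible $2$-orbit is the pair of primitive cube roots of unity (present iff $3\mid k$, i.e.\ $6\mid k$ here), and every $3$-orbit must contain a fixed point of $x\mapsto 1/x$ other than $-1\notin S$, hence equals $\{1,-2,-\tfrac12\}$ and is present iff $2^k=1$. Already modulo $3$ this separates $|S|$ from $m$, since $|S|\equiv 2\pmod 3$ or $0\pmod 3$ according as $6\mid k$ or not, while $m\equiv 0\pmod 3$ exactly when $6\mid k$; combined with Proposition~\ref{prop:1} (whose hypothesis $p>\frac{3k}{2}-1$ is implied) this yields $(0,0)\le m-1$. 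What your route buys is the finer congruence for $(0,0)$ modulo $6$, which anticipates the dichotomy of Proposition~\ref{prop:0p}; what the paper's route buys is uniformity, one determinant evaluation covering both parities without needing Proposition~\ref{prop:1} as input.
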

\begin{proof}
If $k=2m+1$, then the upper right
$(m+1)\times(m+1)$ submatrix of $C^{(a,a)}$ is given by (\ref{ur1}).
If $k=2m$, then the
$(m+1)\times(m+1)$ submatrix of $C^{(0,0)}$ consisting
of the first $m+1$ rows, the upper right $m$ columns and
the first column is also given by (\ref{ur1}).
Since $p>\frac{3k}{2}$,
the determinant (\ref{urd1}) is nonzero modulo $p$, hence
$\rank C^{(a,a)}\geq \frac{k+1}{2}$ if $k$ is odd,
and $\rank C^{(0,0)}\geq \frac{k}{2}+1$ if $k$ is even.
The result then follows from Lemma~\ref{lem:1}.
\end{proof}


\begin{prop}\label{prop:0p}
Let $\lambda$ denote the cyclotomic number $(0,0)$.
If $p$ is sufficiently large,
then $\lambda=2$ or $\lambda=0$, according as
$k\equiv0\pmod6$ or not.
\end{prop}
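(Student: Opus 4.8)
The plan is to use Lemma~\ref{lem:1}, which tells us that $\lambda=(0,0)$ equals the number of common roots in $F$ of $\phi(X)=(X+1)^k-1$ and $\psi(X)=X^k-1$. The key observation is that when $p$ is sufficiently large, the arithmetic mod $p$ becomes arithmetic over $\mathbb{Z}$: a common root $x\in F$ satisfies both $x^k=1$ and $(x+1)^k=1$, so if $x$ has multiplicative order $d\mid k$ and $x+1$ has order $d'\mid k$, then $x$ is an element of the cyclotomic field generated by roots of unity of orders dividing $k$. First I would translate the problem into a statement over $\mathbb{C}$ (or over $\overline{\mathbb{Q}}$): count the pairs $(\zeta,\eta)$ of roots of unity, each of order dividing $k$, with $\eta=\zeta+1$. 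The point is that for large $p$, reduction modulo a suitable prime gives a bijection between these complex solutions and the solutions in $F$, provided the relevant resultant $\mathrm{Res}((X+1)^k-1,\,X^k-1)$ does not vanish modulo $p$; since this resultant is a fixed nonzero integer (the two polynomials share no factor over $\mathbb{Q}$ once we remove the forced common structure), it is invertible mod $p$ for all large $p$.

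The heart of the matter is therefore the purely number-theoretic count: for which $\zeta$ on the unit circle with $\zeta^k=1$ is $\zeta+1$ also a $k$-th root of unity? Writing $\zeta=e^{i\theta}$, the condition $|\zeta+1|=1$ forces $\zeta+1$ to lie on the unit circle, and an elementary computation gives $\cos\theta=-\tfrac12$, i.e. $\theta=\pm\tfrac{2\pi}{3}$, so $\zeta$ is a primitive cube root of unity $\omega$ or $\omega^2=\bar\omega$. One then checks directly that $\omega+1=-\omega^2$ and $\omega^2+1=-\omega$, which are primitive sixth roots of unity. Hence a common root $x$ exists in the complex picture if and only if $\omega,\bar\omega\in\langle\text{$k$-th roots of unity}\rangle$ and $-\omega,-\bar\omega$ are likewise $k$-th roots of unity; this happens precisely when $3\mid k$ and the relevant sixth roots of unity are available, which I expect to reduce to the condition $6\mid k$. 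In that case there are exactly two such $x$ (namely $\omega$ and $\bar\omega$), giving $\lambda=2$; otherwise there are none, giving $\lambda=0$.

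To make the reduction rigorous I would argue as follows. The solutions $x=\omega,\bar\omega$ and their translates are algebraic numbers in $\mathbb{Q}(\omega)=\mathbb{Q}(\sqrt{-3})$, and the remaining (complex) common roots are none; so $\gcd(\phi,\psi)$ over $\mathbb{Q}$ is either $1$ (when $6\nmid k$) or the minimal polynomial $X^2+X+1$ of $\omega$ (when $6\mid k$). For all but finitely many $p$ this factorization is preserved under reduction: the product of the leading coefficients and the discriminants and the resultant of $\phi/\gcd$ with $\psi/\gcd$ is a fixed nonzero integer, invertible for large $p$, which guarantees that $\phi$ and $\psi$ split in $\overline{F}$ with the same intersection pattern, and that $\omega,\bar\omega$ reduce to distinct elements of $F$ exactly when $6\mid k$ forces $X^2+X+1$ to split mod $p$.

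The main obstacle I anticipate is precisely this last point: controlling the splitting field. Over $\mathbb{C}$ we always have the two solutions $\omega,\bar\omega$ whenever $3\mid k$, but in $F=GF(q)$ we need these to actually lie in $F$, not merely in an extension; equivalently we need $X^k-1$ and $(X+1)^k-1$ to have their common roots rational over $F$. Since $q\equiv1\pmod{k}$ by hypothesis, all $k$-th roots of unity lie in $F$, so $\omega,\bar\omega\in F$ as soon as $3\mid k$; the extra condition $6\mid k$ is what guarantees that the translates $-\omega,-\bar\omega$ are again $k$-th (not merely $2k$-th) roots of unity, i.e. that the pair genuinely contributes to $\phi(x)=0$ as well. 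Pinning down this divisibility bookkeeping—and verifying that no spurious common roots appear mod $p$ for large $p$—is where the real care is needed; the cardinality count itself is then immediate.
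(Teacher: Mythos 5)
Your proposal is correct and follows essentially the same route as the paper: the paper computes the eigenvalues $(\zeta^j+1)^k-1$ of the integer matrix $C^{(0,0)}$, observes they vanish exactly when $\zeta^j$ is a primitive cube root of unity (forcing $6\mid k$), and transfers to characteristic $p$ via a fixed nonzero integer determinant or minor; you perform the identical computation on the polynomial side of Lemma~\ref{lem:1} (common roots of $(X+1)^k-1$ and $X^k-1$) and transfer via a fixed nonzero integer resultant. Since Lemma~\ref{lem:1} identifies these two viewpoints, this is the same argument in polynomial rather than matrix language.
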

\begin{proof}
Since the matrix $C^{(0,0)}$ does not involve $\beta$ or $\gamma$
as entries, we may regard it as a matrix over $\Z$.
The eigenvalues of $C^{(0,0)}$ are
\[
(\zeta^j+1)^k-1\quad(j=0,1,\dots,k-1)
\]
where $\zeta=\exp\frac{2\pi i}{k}$. This is zero only if
$k\equiv0\pmod6$, and
$\zeta^j=\exp\frac{2\pi i}{3}$ or $\exp\frac{4\pi i}{3}$.
If $k\not\equiv0\pmod6$, all the eigenvalues of
$C^{(0,0)}$ are nonzero, hence $C^{(0,0)}$ is invertible
in characteristic $0$. This implies that $\det C^{(0,0)}$
is a nonzero integer. If $p>|\det C^{(0,0)}|$,
then $C^{(0,0)}$ is invertible in
characteristic $p$, hence $\lambda=0$ by Lemma~\ref{lem:1}.

If $k\equiv0\pmod6$, then the multiplicity of $0$ as an
eigenvalue of $C^{(0,0)}$ is $2$. This implies that
$C^{(0,0)}$ contains a $(k-2)\times(k-2)$ minor whose determinant
is nonzero in characteristic $0$. Thus, if $p$ is sufficiently large,
$C^{(0,0)}$ has rank $k-2$,
hence $\lambda=2$ by Lemma~\ref{lem:1}.
\end{proof}

When $k\not\equiv0\pmod6$, there are only finitely many primes $p$
which divide $\det C^{(0,0)}$. This means that the set of
characteristics $p$ for which $\lambda>0$ holds is finite.
However, as we do not know any formula for $\det C^{(0,0)}$,
we do not know the set of characteristics for which $\lambda>0$ holds.

\begin{proof}[Proof of Theorem~\ref{thm:main}]
All the statements are direct consequences of
the above propositions.
\end{proof}

\section{Additional results}

We set
$\beta=\alpha^{ak}$, $\gamma=1+\beta-\alpha^{bk}$, and define
$\phi,\psi\in F[X]$ by
\[\phi(X)=(X+1)^k-\alpha^{bk},\:
\psi(X)=X^k-\beta,\]
as in the proof of Lemma~\ref{lem:1}.
Define $\phi_0\in F[X]$ by
\begin{align*}
\phi_0(X)&=\phi(X)-\psi(X)
\\&=\gamma+\sum_{i=1}^{k-1}\binom{k}{i}X^i.
\end{align*}
Let $J$ denote the ideal of $F[X]$ generated by $\phi_0$ and $\psi$.
Then it follows from Lemma~\ref{lem:1} that
\begin{equation}\label{J}
(a,b)=\min\{\deg\rho\mid0\neq\rho\in J\}.
\end{equation}

\begin{prop}
If $\frac{3k}{4}\leq p<k$, then $(a,b)\leq  \left\lfloor\frac{k}{2}\right\rfloor$ for all $a,b$ with $0\leq a,b<e$.
\end{prop}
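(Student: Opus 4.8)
The plan is to exploit the reformulation (\ref{J}): since $J=(\phi_0,\psi)=(\phi,\psi)$, it suffices to produce a \emph{single} nonzero element of $J$ of degree at most $\left\lfloor\frac{k}{2}\right\rfloor$. The hypothesis $\frac{3k}{4}\le p<k$ is exactly what makes the arithmetic tractable: setting $r=k-p$, it yields $1\le r$ together with $r\le k/4$, and in particular $p<k<2p$, so that $k=p+r$ with $0<r<p$. This is the regime in which the Frobenius endomorphism collapses the top-degree behaviour of $\phi$ and $\psi$, and that collapse is the whole point of the argument.

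First I would record the two identities valid in characteristic $p$, namely $(X+1)^k=(X+1)^p(X+1)^r=(X^p+1)(X+1)^r$ and $X^k=X^pX^r$, both immediate from $k=p+r$ and $(X+1)^p=X^p+1$. Guided by these, I would form the combination
\[
\rho(X)=X^r\phi(X)-(X+1)^r\psi(X)\in J .
\]
A priori $\rho$ has degree at most $k+r$, but the identities show $X^r(X+1)^k-(X+1)^rX^k=X^r(X+1)^r$, so every term of degree exceeding $2r$ cancels and one is left with
\[
\rho(X)=(X+1)^r(X^r+\beta)-\alpha^{bk}X^r .
\]
Reading off this expression, since $r\ge1$ the factor $(X+1)^r(X^r+\beta)$ contributes $X^{2r}$ with coefficient $1$, while $-\alpha^{bk}X^r$ has degree $r<2r$; hence $\rho$ has degree exactly $2r$ and is in particular nonzero, uniformly in $a$ and $b$. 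Finally $r\le k/4$ forces $2r\le k/2$, and since $2r$ is an integer this gives $2r\le\left\lfloor\frac{k}{2}\right\rfloor$, whence by (\ref{J})
\[
(a,b)\le\deg\rho=2r\le\left\lfloor\frac{k}{2}\right\rfloor .
\]

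The entire difficulty is concentrated in choosing the right multipliers. The main obstacle is recognizing that in the band $p<k<2p$ the leading parts of $\phi$ and $\psi$ become proportional after multiplication by $X^r$ and $(X+1)^r$ respectively, so that a degree-$(k+r)$ expression collapses to degree $2r$; this is where the precise hypothesis $p\ge\frac{3k}{4}$ enters, via $r\le k/4$. Once the combination $\rho$ is found, verifying that it is nonzero and that $2r\le\left\lfloor\frac{k}{2}\right\rfloor$ is routine, and I do not expect any genuine obstruction beyond making the Frobenius cancellation explicit.
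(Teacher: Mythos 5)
Your proof is correct and follows essentially the same route as the paper: both exhibit a nonzero element of $J$ of degree $2(k-p)$ of the form $X^{k-p}\phi-B\psi$ and then invoke (\ref{J}), and in fact your $\rho=X^r\phi(X)-(X+1)^r\psi(X)$ is the very same polynomial as the paper's combination $X^{k-p}\phi_0(X)-\sum_{i=p}^{k-1}\binom{k}{i}\psi(X)X^{i-p}$, because $X^r+\sum_{i=p}^{k-1}\binom{k}{i}X^{i-p}\equiv(X+1)^r\pmod{p}$ by Lucas' theorem. Your derivation via $(X+1)^p=X^p+1$ is simply a cleaner way of seeing the same cancellation that the paper extracts from the vanishing of $\binom{k}{i}$ modulo $p$ for $k-p+1\le i\le p-1$.
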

\begin{proof}
We claim that, for each $i$ with $k-p+1\leq i\leq p-1$,
\[\binom{k}{i}\equiv 0\pmod{p}.\]
This follows since $k(k-1)\cdots (k-i+1)\equiv0\pmod{p}$
while $i!\not\equiv0\pmod{p}$.

Note that
\begin{align*}
&X^{k-p}\phi_0(X)-\sum_{i=p}^{k-1}\binom{k}{i}\psi(X)X^{i-p}
\\&=\gamma X^{k-p}+\sum_{i=1}^{k-p}\binom{k}{i}X^{i+k-p}
+\beta\sum_{i=p}^{k-1}\binom{k}{i}X^{i-p}\\
&\quad+\sum_{i=k-p+1}^{p-1}\binom{k}{i}X^{i+k-p},
\end{align*}
and the last summand is zero by the claim.
Since
\[\binom{k}{k-p}=\binom{k}{p}\not\equiv 0\pmod{p}\]
by the assumption,
$J$ contains
a nonzero polynomial of degree $2k-2p$.
It follows from
(\ref{J})
and the assumption on $p$ that
\[(a,b)\leq 2k-2p\leq 2k-2\cdot \frac{3k}{4}=\frac{k}{2}.\]
This implies that $(a,b)\leq \left\lfloor\frac{k}{2}\right\rfloor$.
\end{proof}

\begin{prop}
If $k+1<p^t<\frac{3k}{2}$ for some positive integer $t$, then $(a,b)\leq \left\lfloor\frac{k}{2}\right\rfloor$
and $(a,a)\leq \left\lfloor\frac{k}{2}\right\rfloor-1$ for all $a,b$ with $0\leq a,b<e$.
\end{prop}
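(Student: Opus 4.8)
The plan is to exhibit an explicit nonzero element of the ideal $J$ of small degree and then read off the bound from (\ref{J}). The driving mechanism is the Frobenius identity $(X+1)^{p^t}=X^{p^t}+1$, valid in characteristic $p$, combined with the congruences $X^k\equiv\beta$ and $(X+1)^k\equiv\alpha^{bk}$ modulo $J$ that come from $\psi,\phi\in J$. Writing $s=p^t-k$, the hypothesis $k+1<p^t<\frac{3k}{2}$ gives $2\le s<\frac{k}{2}$, so producing an element of $J$ of degree $s$ (for the first bound) or $s-1$ (for the diagonal bound) will be enough.

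First I would set $s=p^t-k$ and expand $(X+1)^{p^t}=(X+1)^k(X+1)^s$ and $X^{p^t}=X^kX^s$. Substituting $(X+1)^k=\phi+\alpha^{bk}$ and $X^k=\psi+\beta$ into the Frobenius identity and rearranging yields
\[
\alpha^{bk}(X+1)^s-\beta X^s-1=\psi X^s-\phi(X+1)^s,
\]
whose right-hand side manifestly lies in $J$. I denote the left-hand side by $P(X)$; this is the candidate low-degree element of $J$.

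Next I would compute $\deg P$. The coefficient of $X^s$ in $P$ is $\alpha^{bk}-\beta$, which is nonzero precisely when $a\neq b$; in that case $P$ has degree exactly $s$, so by (\ref{J}) we get $(a,b)\le s\le\lfloor k/2\rfloor$. When $a=b$ we have $\alpha^{bk}=\beta$, so $P=\beta\bigl((X+1)^s-X^s\bigr)-1$, whose leading coefficient is $\beta s$ in degree $s-1$. Here the crucial arithmetic input is that $p\nmid k$ (since $k\mid q-1$ forces $\gcd(k,p)=1$), whence $p\nmid s=p^t-k$ and $\beta s\neq0$; thus $\deg P=s-1$ and (\ref{J}) gives $(a,a)\le s-1\le\lfloor k/2\rfloor-1$. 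Since $s<\frac{k}{2}$ one checks in both parities that $s\le\lfloor k/2\rfloor$ and $s-1\le\lfloor k/2\rfloor-1$, which closes the argument.

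The one delicate point, and the step I expect to be the main obstacle, is guaranteeing that $P$ is genuinely nonzero with the claimed degree rather than collapsing: if $s$ happened to be a power of $p$, Frobenius would force $(X+1)^s-X^s-1=0$ and $P$ could vanish in the diagonal case. I expect the divisibility fact $p\nmid k$, forced by $k\mid q-1$, to be exactly what excludes this, because it yields $p\nmid s$ and hence a nonvanishing leading coefficient in each case, making the degree computation and the non-triviality of $P$ simultaneous.
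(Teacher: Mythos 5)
Your proof is correct, and it takes a genuinely different route from the paper's. The paper forms the combination $f=\sum_{i=0}^{m}\binom{p^t-k}{m-i}\phi_i$ of the shifted polynomials $\phi_i(X)=X^i\phi_0(X)-\psi(X)\sum_{j=0}^{i-1}\binom{k}{i-j}X^j$, where $m=\lfloor k/2\rfloor$, and uses the Vandermonde convolution $\sum_{i}\binom{p^t-k}{m-i}\binom{k}{l-i}=\binom{p^t}{m+k-l}$ together with $\binom{p^t}{j}\equiv 0\pmod p$ for $0<j<p^t$ to kill all coefficients above degree $m$, finding $\deg f=m$ or $m-1$ according as $a\neq b$ or $a=b$. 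You instead apply Frobenius directly: with $s=p^t-k$, the identity $(X+1)^{p^t}=X^{p^t}+1$ combined with $\phi,\psi\in J$ (legitimate, since $J=(\phi_0,\psi)=(\phi,\psi)$) produces $P=\alpha^{bk}(X+1)^s-\beta X^s-1\in J$, and your leading-coefficient checks ($\alpha^{bk}-\beta\neq 0$ when $a\neq b$; $\beta s\neq 0$ when $a=b$, since $p\nmid k$ forces $p\nmid s$) together with $2\le s<k/2$ give the bounds --- indeed the sharper ones $(a,b)\le p^t-k$ and $(a,a)\le p^t-k-1$. The two constructions are secretly the same element of $J$ up to a monomial factor: modulo $\psi$ one has $f\equiv\phi\cdot X^{m-s}(X+1)^s$ and $P\equiv-\phi\cdot(X+1)^s$, and a degree count shows $f=-X^{m-s}P$; your representative is simply the reduced one, which is why it yields the better degree bound while avoiding the binomial-identity machinery. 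One small remark: the worry in your final paragraph is already disposed of by your own computation, since the nonvanishing coefficient $\beta s$ of $X^{s-1}$ precludes any collapse of $P$ in the diagonal case, so no separate discussion of $s$ being a power of $p$ is needed.
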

\begin{proof}
We set $m=\left\lfloor\frac{k}{2}\right\rfloor$.
For each positive integer $i$ with $i\leq k-1$,
we define $\phi_i\in J$ by
\[\phi_i(X)=X^i\phi_0(X)-\psi(X)\sum_{j=0}^{i-1}\binom{k}{i-j}X^j.\]
Then
\[
\phi_i(X)=\gamma X^i+\beta\sum_{j=0}^{i-1}\binom{k}{i-j}X^j
+\sum_{j=i+1}^{k-1}\binom{k}{j-i}X^j,
\]
and hence $\deg(\phi_i)\leq k-1$.
Let
\[f(X)=\sum_{i=0}^{m}\binom{p^t-k}{m-i}\phi_{i}(X)
=\sum_{l=0}^{k-1}a_lX^l
.\]
For $0\leq i\leq l<k$, the coefficient of $X^l$ in
\[
\psi(X)\sum_{j=0}^{i-1}\binom{k}{i-j}X^j
\]
is zero. This implies that $\phi_i(X)$ and $X^i\phi_0(X)$
have the same coefficient at degree $l$.
Thus, for $m< l\leq k-1$,
\begin{align*}
a_l
&=\sum_{i=0}^m\binom{p^t-k}{m-i}\binom{k}{l-i}
\nexteq
\binom{p^t}{m+k-l}
\nexteq0,
\end{align*}
and
\begin{align*}
a_m&=
\sum_{i=0}^{m-1}\binom{p^t-k}{m-i}\binom{k}{m-i}+\gamma
\nexteq
\binom{p^t}{k}+\gamma-1
\nexteq
\gamma-1.
\end{align*}

We claim that $a_{m-1}=\beta k$ if $\gamma=1$.
Since the coefficient of $X^{m-1}$ in
\[
\psi(X)\sum_{j=0}^{m-1}\binom{k}{m-j}X^j
\]
is $-\beta k$, we have
\begin{align*}
a_{m-1}
&=\sum_{i=0}^{m-1}\binom{p^t-k}{m-i}\binom{k}{m-1-i}+\beta k
\nexteq
\sum_{j=0}^{m-1}\binom{p^t-k}{j+1}\binom{k}{j}+\beta k
\nexteq
\sum_{j=0}^{k}\binom{p^t-k}{j+1}\binom{k}{k-j}+\beta k
&&\text{(since $p^t-k\leq m$)}
\nexteq
\binom{p^t}{k+1}+\beta k
\nexteq
\beta k.
\end{align*}
Therefore, $\deg(f)=m$ if $\gamma\ne 1$, and $\deg(f)=m-1$ if $\gamma=1$ or equivalently $a=b$.
Since $f\in J$, it follows from (\ref{J}) that $(a,b)\leq m$ and $(a,a)\leq m-1$.
\end{proof}

\noindent {\bf Acknowledgements.}
The authors would like to thank Richard M. Wilson for suggesting
the use of his formula for the variance of cyclotomic numbers.


\end{document}